\documentclass[letterpaper]{article}

\usepackage[utf8x]{inputenc}
\usepackage[T1]{fontenc}
\usepackage[margin=1in]{geometry}

\usepackage{amsmath,amssymb,amsthm,bbm,bm}
\usepackage{mathrsfs}
\usepackage[linesnumbered,ruled,noend]{algorithm2e}

\newtheorem{Remark}{Remark}

\newtheorem{Lemma}{Lemma}

\newtheorem{Theorem}{Theorem}

\usepackage{graphicx}
\renewcommand{\Re}{\mathbb{R}}

\newcommand{\Expectation}{\mathbb{E}}
\newcommand{\trace}{\mathtt{tr}}

\newcommand{\ScriptA}{\mathcal{A}}
\newcommand{\ScriptB}{\mathcal{B}}
\newcommand{\ScriptD}{\mathcal{D}}

\newcommand{\erf}{\mathtt{erf}}
\newcommand{\minimize}{\textrm{minimize}}

\usepackage[subrefformat=parens,labelformat=parens]{subfig}


\title{\LARGE \bf Input Hard Constrained Optimal Covariance Steering}

\author{
	\thanks{This work has been supported by NSF award CPS-1544814 and ARL DCIST CRA  W911NF-
17-2-0181. The first author was also partially
supported by the Funai Foundation for Information Technology.}%
	Kazuhide Okamoto\thanks{K. Okamoto is with the School of Aerospace Engineering, Georgia Institute of Technology, Atlanta, GA 30332-0150, USA Email: {\small kazuhide@gatech.edu}}%
	\qquad 
	Panagiotis Tsiotras
	\thanks{P. Tsiotras is with the School of Aerospace Engineering, and with the Institute for Robotics and Intelligent Machines, Georgia Institute of Technology, Atlanta, GA 30332-0150, USA Email: {\small tsiotras@gatech.edu}}
}

\begin{document}
	\maketitle
	\thispagestyle{empty}
	\pagestyle{empty}
	\begin{abstract}
	We address the optimal covariance steering (OCS) problem for stochastic discrete linear systems with additive  Gaussian noise under state chance constraints and input hard constraints. 
	Because the system state can be unbounded due to the unbounded noise, the state constraints are formulated as probabilistic (chance) constraints, i.e., the maximum probability of constraint violation is constrained.
	In contrast, because it is hard to interpret the appropriate control action when the control command violates the constraints, probabilistically formulating the control constraints are difficult, and deterministic hard constraints are preferable.
	In this work we introduce an OCS approach subject to simultaneous state chance constraints and input hard constraints and validate the approach using numerical simulations.
	\end{abstract}

\section{Introduction}\label{sec:Introduction}

The optimal covariance steering (OCS) problem is a stochastic optimal control problem such that a controller steers the covariance of a stochastic system to a target terminal value, while minimizing a state and control dependent cost in expectation.
Since the late `80s~\cite{hotz1985covariance, hotz1987covariance}, the infinite horizon case for linear time invariant systems has been thoroughly researched.
On the other hand, until recently, the finite horizon case has not been investigated~\cite{chen2016optimalI,bakolas2016optimalCDC,bakolas2018finite,beghi1996relative,goldshtein2017finite}.
Our previous work~\cite{okamoto2018Optimal} introduced state chance constraints into the OCS problem.
While it is possible to separately design mean steering (feedforward) and covariance steering (feedback) controllers, when state chance constraints exist, the state mean and state covariance constraints are coupled, and one needs to simultaneously design the mean and covariance steering controllers. 
In addition, in~\cite{okamoto2018Optimalb} we proposed an OCS controller that is computationally efficient and can deal with non-convex state chance constraints; the algorithm was then applied to vehicle path planning problems.

Note that the above mentioned OCS controllers cannot deal with input hard constraints, because they are affine functions of the state or the disturbance, which are both unbounded, and the control commands become unbounded as well.
Thus, they cannot satisfy input hard constraint specifications. 
Such a situation occurs in many real-world scenarios. 
For example, in aircraft~\cite{okamoto2018Optimal} or spacecraft~\cite{Ridderhof2019MinimumFuel} control problems the control command values have physical restrictions such as minimum/maximum thrust.

The contribution of this article is the introduction of an OCS controller that can accommodate input \emph{hard} constraints.
To the best of our knowledge, input hard constraints have not been discussed in the framework of OCS, while input \emph{soft} constraints have been investigated by several prior works. 
These include the incorporation of a maximum value of the expectation of quadratic functions of the control inputs~\cite{bakolas2016optimalCDC,bakolas2018finite}, and the maximum probability of control constraint violation~\cite{Ridderhof2019MinimumFuel}. 
Our formulation of the input constraint is different than these approaches, in the sense that we directly impose hard constraints on the input.
Inspired by the approach in~\cite{paulson2017stochastic,hokayem2012stochastic}, we propose to use a saturation function into our OCS controller~\cite{okamoto2018Optimalb} to impose input hard constraints. 

The remainder of this paper is organized as follows.
Section~\ref{sec:ProblemStatement} formulates the problem and introduces the input constrained OCS problem setup.
In Section~\ref{sec:ProposedApproach}, we introduce the newly developed input hard constrained OCS controller. 
Section~\ref{sec:Numerical Simulation} validates the effectiveness of the proposed approach using numerical simulations. 
Finally, Section~\ref{sec:Summary} summarizes the results and discusses some possible future research directions. 

We use $P \succ (\succeq)~0$ to denote that the matrix $P$ is symmetric positive (semi)-definite.
Also, we use $P > (\geq)~0$ to denote element-wise inequalities of the matrix $P$.
The trace of $P$ is denoted with $\trace(P)$, and $\mathtt{blkdiag}(P_0,\ldots,P_N)$ denotes the block-diagonal matrix with block-diagonal matrices $P_0,\ldots, P_N$.
$\|v\|$ is the 2-norm of the vector $v$, and $\|P\|$ is the induced 2-norm of the matrix~$P$.
$x \sim \mathcal{N}(\mu, \Sigma)$ denotes a random variable $x$ sampled from a Gaussian distribution with mean $\mu$ and (co)variance $\Sigma$.
$\Expectation[x]$ denotes the expected value, or the mean, of the random variable $x$. 
Finally, $\erf(\cdot)$ is the error function.

\section{Problem Statement\label{sec:ProblemStatement}}
In this section we formulate the input-hard constrained OCS problem.

\subsection{Problem Formulation}
We consider the following discrete-time stochastic linear time-varying system with additive noise
\begin{equation}  \label{eq:SystemDynamics}
	x_{k+1} = A_kx_k + B_ku_k + w_k, 
\end{equation}
where $k$ denotes the time index, $x\in\Re^{n_x}$ is the state, $u\in\Re^{n_u}$ is the control input, and $w\in\Re^{n_x}$ denotes a random vector distributed according to a zero-mean white Gaussian noise with the properties,
\begin{equation}
	\mathbb{E}\left[w_k\right] = 0, \
	\mathbb{E}\left[w_{k_1}{w_{k_2}^{\top}}\right] = 
	\begin{cases} D_{k_1}D_{k_2}^\top, &\mbox{if\ } k_1 = k_2,\\
	0, &\mbox{otherwise.}
	\end{cases}\label{eq:GaussianNoise}
\end{equation}
In addition, $A_k$, $B_k$, and $D_k$ are known system matrices having appropriate dimensions. 
Furthermore, we assume that
\begin{equation}
	\mathbb{E}\left[x_{k_1}w_{k_2}^{\top}\right]=0,\qquad0 \leq k_1 \leq k_2 \leq N.
\end{equation}
We also assume that the state and control inputs are subject to the constraints
\begin{align}\label{eq:state_and_input_const}
	x_k \in \mathcal{X},\quad u_k \in \mathcal{U},
\end{align}
for all $k \geq 0$, where $\mathcal{X} \subseteq \Re^{n_x}$ and $\mathcal{U}\subseteq \Re^{n_u}$ are convex sets containing the origin.
Throughout this work, we assume that the sets $\mathcal{X}$ and $\mathcal{U}$ are represented as an intersection of $N_s$ and $N_c$ linear inequality constraints, respectively, as follows
\begin{align}
	\mathcal{X} &= \bigcap_{j = 0}^{N_s-1} \left\{x: \alpha_{x,j}^\top x \leq \beta_{x,j}\right\},\label{eq:Xdefinition}\\
	\mathcal{U} &= \bigcap_{s = 0}^{N_c-1} \left\{u: \alpha_{u,s}^\top u \leq \beta_{u,s}\right\},\label{eq:Udefinition}
\end{align}
where $\alpha_{x,j}\in\Re^{n_x}$ and $\alpha_{u,s}\in\Re^{n_u}$ are constant vectors, and $\beta_{x,j} \in \Re$ and $\beta_{u,s} \in \Re$ are constant scalars.
Notice that, since the system noise in~(\ref{eq:SystemDynamics}) is possibly unbounded, the state is unbounded as well. 
Thus, we formulate the state constraints $x_k \in \mathcal{X}$ probabilistically, as chance constraints,
\begin{align}\label{eq:origCC}
	\Pr(x_k \notin \mathcal{X}) \leq \epsilon,
\end{align}
where $\epsilon \in (0, 1)$.
We keep the second set inclusion in~(\ref{eq:state_and_input_const}) since for the control inputs, hard constraints are preferable.
Using Boole's inequality, (\ref{eq:Xdefinition}) and (\ref{eq:origCC}) can be satisfied if
\begin{align}\label{eq:stateChance}
	\Pr\left(\alpha_{x,j}^\top x_k > \beta_{x,j} \right) &\leq p_{x,j},
\end{align}
for $j=0,\ldots,N_s-1$, where $p_{x,j}$ are pre-specified such that 
\begin{align}\label{eq:p_xj}
	\sum_{j=0}^{N_s-1} p_{x,j}\leq \epsilon.
\end{align}

The initial state $x_0 \in \Re^{n_x}$ is a random vector that is drawn from a normal distribution according to 
\begin{equation}\label{eq:x0}
	x_0\sim\mathcal{N}(\mu_0,\Sigma_0),
\end{equation} 
where $\mu_0 \in \Re^{n_x}$ and $\Sigma_0 \in \Re^{n_x \times n_x}$. We assume that $\Sigma_0 \succeq 0$.

Our objective is to design a control sequence $\{u_0,\ldots,u_{N-1}\}$ that steers the system state $x_k$ to a target Gaussian distribution at time step $N$, that is,
\begin{align}\label{eq:xf}
	x_N \sim \mathcal{N}(\mu_f, \Sigma_f),
\end{align}
where $\mu_f \in \Re^{n_x}$, $\Sigma_f \in \Re^{n_x \times n_x}$.
We assume that $\Sigma_f \succ 0$ and wish to minimize the state and control expectation-dependent quadratic cost
\begin{align}\label{eq:originalCostFunc}
	J(u_0,\ldots,u_{N-1}) = \mathbb{E}\left[\sum_{k=0}^{N-1}x_k^\top Q x_k + u_k^\top R u_k\right],
\end{align}
where $Q \succeq 0$ and $R \succ 0$.

In summary, we wish to solve the following finite-horizon optimal control problem
\begin{subequations}\label{prob:FiniteHorizonOptimalCovarianceSteeringProblem}
	\begin{align}
	&\minimize \nonumber\\ 
	&J(u_0,\ldots,u_{N-1}) = \mathbb{E}\left[\sum_{k=0}^{N-1}x_k^\top Q x_k + u_k^\top R u_k\right], \\
	&\textrm{subject to}\nonumber \\
	& \hspace{8pt} x_{k+1} = A_kx_k + B_ku_k + w_k, \\
	& \hspace{8pt} \Pr\left(\alpha_{x,j}^\top x_k > \beta_{x,j} \right) \leq p_{x,j},\; \forall j \in [0,N_s-1], \\
	& \hspace{8pt} \alpha_{u,s}^\top u_k \leq \beta_{u,s},\; \forall s \in [0,N_c-1],\label{eq:OCSInputConstraint}\\
	& \hspace{8pt} x_N = x_f \sim \mathcal{N}(\mu_f,\Sigma_f).
	\end{align}
\end{subequations}

\begin{Remark}
	System (\ref{eq:SystemDynamics}) is assumed to be controllable in the absence of constraints and disturbances, that is, $x_f$ is reachable from $x_0$ for any $x_f \in \Re^{n_x}$, provided that $w_k = 0$ for $k = 0,\ldots,N-1$. This assumption implies that given any $x_f \in \Re^{n_x}$ and $x_0 \in \Re^{n_x}$, there exists a sequence of control inputs $\{u_0,\ldots,u_{N-1}\}$ that steers $x_0$ to $x_f$ in the absence of disturbances or any constraints.
\end{Remark}

\section{Proposed Approach\label{sec:ProposedApproach}}

This section introduces the proposed approach to solve Problem~(\ref{prob:FiniteHorizonOptimalCovarianceSteeringProblem}).
Instead of~(\ref{eq:SystemDynamics}), we use the following equivalent form of the system dynamics
\begin{equation}\label{eq:X=Ax0+BU+DW}
	X = \ScriptA x_{0} + \ScriptB U+\ScriptD W,
\end{equation}
where $X\in\Re^{(N+1) n_x}$, $U\in\Re^{Nn_u}$, and $W\in\Re^{Nn_x}$ represents the concatenated state, input, and disturbance vector, respectively, e.g.,  $X = \begin{bmatrix}
x_{0}^\top x_{1}^\top \cdots x_{N}^\top
\end{bmatrix}^\top$,
while the matrices $\ScriptA\in\Re^{(N+1)n_x\times n_x}$, $\ScriptB \in\Re^{(N+1)n_x\times Nn_u}$, and $\ScriptD \in\Re^{(N+1)n_x\times Nn_x}$ are defined accordingly~\cite{okamoto2018Optimal}.
Note that 
\begin{subequations}\label{eq:Ex0x0x0WWW}
	\begin{align} 
	\mathbb{E}[x_0x_0^\top] &= \Sigma_0 + \mu_0\mu_0^\top,\\
	\mathbb{E}[x_0W^\top] &= 0, \\
	\mathbb{E}[WW^\top] &=\mathtt{blkdiag}(D_0D_0^\top,,\ldots,D_{N-1}D_{N-1}^\top). 
	\end{align}
\end{subequations}
We use the matrix $E_k = \left[0_{n_x,kn_x}, I_{n_x},0_{n_x,(N-k)n_x}\right]\in \Re^{n_x\times(N+1)n_x}$ and $F_k = \left[0_{n_u,kn_u}, I_{n_u},0_{n_u,(N-1-k)n_u}\right]\in \Re^{n_u\times Nn_u}$ such that $x_k = E_k X$ and $u_k = F_k U$.

\subsection{Input-Constrained OCS Problem \label{subsec:InputConstrainedOCS}}

In this section, we discuss the OCS with input hard constraints. 
We start by introducing a relaxed form of Problem~(\ref{prob:FiniteHorizonOptimalCovarianceSteeringProblem}) summarized in the following lemma.

\begin{Lemma}
	Given~(\ref{eq:Xdefinition}), (\ref{eq:Udefinition}), (\ref{eq:p_xj}), and (\ref{eq:xf}), Problem~(\ref{prob:FiniteHorizonOptimalCovarianceSteeringProblem}) can be converted to the following form with a convex relaxation of the terminal covariance constraint.
	\begin{subequations}\label{prob:OptimalCovarianceSteeringConvertedInputConstrained}
		\begin{align}
			&\minimize\; J(U) =\ \mathbb{E}\left[X^\top \bar{Q} X + U^\top \bar{R} U \right],\label{eq:ObjInputConstConvert1}\\
			&\textrm{subject to}\nonumber \\
			&\hspace{20pt}X = \ScriptA x_{0} + \ScriptB U+\ScriptD W,\\
			&\hspace{20pt}\Pr\left(\alpha_{x,j}^\top E_k X > \beta_{x,j} \right) \leq p_{x,j},   ~ j \in [0, N_s-1],  \label{eq:CCInputContConvert}\\
			&\hspace{20pt}\alpha_{u,s}^\top F_k U \leq \beta_{u,s},~~s \in [0, N_c-1], \label{eq:ICInputConstConvert}\\
			&\hspace{20pt}\mu_f = E_N\mathbb{E}[X],\label{eq:TermMeanInputConstConvert}\\
			&\hspace{20pt}\Sigma_f \succeq E_N\left(\mathbb{E}[XX^\top] - \mathbb{E}[X]\mathbb{E}[X]^\top\right) E_N^\top,\label{eq:TermCovInputConstConvert}
		\end{align}
	\end{subequations}
	where $\bar{Q} = \mathtt{blkdiag}(Q,\ldots,Q,0) \in \Re^{(N+1)n_x\times(N+1)n_x}$ and $\bar{R} = \mathtt{blkdiag}(R,\ldots,R) \in \Re^{Nn_u\times Nn_u}$.
\end{Lemma}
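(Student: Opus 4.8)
The plan is to show that every component of Problem~\eqref{prob:OptimalCovarianceSteeringConvertedInputConstrained} is either an exact algebraic reformulation of the corresponding component of Problem~\eqref{prob:FiniteHorizonOptimalCovarianceSteeringProblem} or a convex relaxation of it, dispatching the cost, dynamics, state and input constraints, and terminal mean by direct substitution, and isolating the terminal covariance as the single nontrivial step. First I would verify the cost: expanding the block-diagonal structure of $\bar{Q}$ and $\bar{R}$ gives $X^\top\bar{Q}X=\sum_{k=0}^{N-1}x_k^\top Q x_k$, the trailing zero block annihilating the $x_N$ term, and $U^\top\bar{R}U=\sum_{k=0}^{N-1}u_k^\top R u_k$, so taking expectations recovers \eqref{eq:originalCostFunc}. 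The lifted dynamics \eqref{eq:X=Ax0+BU+DW} is the standard concatenated form of the recursion \eqref{eq:SystemDynamics}, obtained by unrolling the one-step update (or by invoking the construction of $\ScriptA$, $\ScriptB$, $\ScriptD$ in~\cite{okamoto2018Optimal}).

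The remaining reformulations are substitutions. Using $x_k=E_kX$ turns the chance constraint \eqref{eq:stateChance} into \eqref{eq:CCInputContConvert}; using $u_k=F_kU$ turns the input hard constraint \eqref{eq:OCSInputConstraint} into \eqref{eq:ICInputConstConvert}; and applying $E_N$ and taking expectations of the lifted dynamics yields $\Expectation[x_N]=E_N\Expectation[X]$, matching \eqref{eq:TermMeanInputConstConvert}. None of these change the feasible set, so they are exact.

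The crux is the terminal covariance. Because $X$ is an affine image of the independent, jointly Gaussian pair $(x_0,W)$, the terminal state $x_N=E_NX$ is Gaussian, so the distributional requirement \eqref{eq:xf} is equivalent to the two moment conditions $\Expectation[x_N]=\mu_f$ and $E_N\bigl(\Expectation[XX^\top]-\Expectation[X]\Expectation[X]^\top\bigr)E_N^\top=\Sigma_f$. I would then relax this latter equality to the Loewner inequality \eqref{eq:TermCovInputConstConvert}, which enlarges the feasible set; since the set of covariance matrices dominated by $\Sigma_f$ in the positive-semidefinite order is convex, this is precisely the convex relaxation named in the statement. The main obstacle is conceptual rather than computational: one must argue that steering the terminal covariance to be no larger than $\Sigma_f$ is an acceptable surrogate for exact matching, and be explicit that \eqref{prob:OptimalCovarianceSteeringConvertedInputConstrained} is therefore a relaxation of \eqref{prob:FiniteHorizonOptimalCovarianceSteeringProblem}, so that feasibility and optimality transfer in only one direction. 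I would also flag that the affine–Gaussian argument underpinning the two-moment characterization tacitly restricts the admissible control parameterization to one that keeps $X$ Gaussian; this is exactly the point at which the nonlinear saturated policy introduced later will force the moment description to become an approximation.
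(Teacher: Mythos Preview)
Your proposal is correct and follows essentially the same approach as the paper: both reduce the cost, dynamics, and constraint reformulations to direct substitution (the paper defers these to~\cite{okamoto2018Optimalb}), and both identify the terminal covariance relaxation $\Sigma_f \succeq \mathrm{Cov}(x_N)$ as the only nontrivial step, justified by the two-moment characterization of a Gaussian. Your additional remarks on the one-directional transfer of feasibility and the implicit Gaussianity assumption on $X$ are accurate and more explicit than what the paper states.
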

\begin{proof}
	The procedure to relax Problem~(\ref{prob:FiniteHorizonOptimalCovarianceSteeringProblem}) to (\ref{prob:OptimalCovarianceSteeringConvertedInputConstrained}) is straightforward using the result of~\cite{okamoto2018Optimalb} and by relaxing the original terminal covariance constraint $\Sigma_f = E_N\left(\mathbb{E}[XX^\top] - \mathbb{E}[X]\mathbb{E}[X]^\top\right) E_N^\top$ to~(\ref{eq:TermCovInputConstConvert}).
	We would like to highlight that the Gaussian distribution can be fully defined by its first two moments. 
	This relaxation makes the terminal state less distributed than the target distribution. 
	In many real-world applications~\cite{Ridderhof2019MinimumFuel} it may be preferable to achieve smaller uncertainty in the terminal state. 
\end{proof}

In order to solve Problem~(\ref{prob:OptimalCovarianceSteeringConvertedInputConstrained}), we propose a control law that is summarized in the following theorem, which is the main result of this paper.
This control policy is an extension of our previous controller~\cite{okamoto2018Optimalb} and is inspired from the approach in~\cite{paulson2017stochastic,hokayem2012stochastic}. 
\begin{Theorem}\label{theorem:InputConstrainedController}
	The control law
	\begin{align}\label{eq:inputConstrainedCSController}
		u_k = v_k + K_k z_k,
	\end{align}
	where $z_k$ is governed by the dynamics
	\begin{subequations}
		\begin{align} 
		z_{k+1} &= A z_k + \varphi(w_k), \label{eq:zDynamics}\\
		z_0 &= \varphi(\zeta_0), \quad \zeta_0 = x_0 - \mu_0, \label{eq:zInit}
		\end{align} 
	\end{subequations}
	where $\varphi(\cdot): \Re^d \mapsto \Re^d$ is an element-wise symmetric saturation function with pre-specified saturation value of the $i$th entry of the input $\zeta_i^{\rm max} > 0$ 
	as 
	\begin{align}\label{eq:saturationFunc}
		\varphi_i(\zeta) = \max(-\zeta_i^{\rm max}, \min(\zeta_i,\zeta_i^{\rm max}) ),
	\end{align}
	converts Problem~(\ref{prob:OptimalCovarianceSteeringConvertedInputConstrained}) to the following convex programming problem
	\begin{subequations}\label{prob:OptimalCovarianceSteeringFinalInputConstrained}
		\begin{align}
		&\minimize\;J(V, K, \Omega) =\nonumber \\
		& \trace\left(\bar{Q} \begin{bmatrix}I & \ScriptB K\end{bmatrix}\Sigma_{XX}	\begin{bmatrix}I \\ K^\top \ScriptB^\top \end{bmatrix}\right) + \trace\left(\bar{R}K\Sigma_{UU} K^\top\right) \nonumber \\ 
		& \hspace{20pt}+ (\ScriptA \mu_0 + \ScriptB V)^\top\bar{Q}(\ScriptA \mu_0 + \ScriptB V) +  V^\top\bar{R}V \label{eq:finalCostFunc}\\
		&\textrm{subject to }\nonumber \\
		&\alpha_{x,j}^\top E_k\left(\ScriptA \mu_0 + \ScriptB V\right)- \beta_{x,j} +\nonumber \\
		& \hspace{20pt}\sqrt{\frac{1-p_{x,j}}{p_{x,j}}} \|\Sigma_{XX}^{1/2}\begin{bmatrix}I & \ScriptB K\end{bmatrix}^\top
		E_k^\top\alpha_{x,j}\| \leq 0, \label{eq:OCSFIC_SC}\\
		&HF_kV + \Omega^\top\sigma \leq h,\\
		&HF_kK[
		\ScriptA\ \ScriptD
		] = \Omega^\top S,\\
		&\Omega \geq 0,\\
		& \mu_f = E_N\left(\ScriptA \mu_0 + \ScriptB V\right),\\
		&\Sigma_f \succeq E_N \begin{bmatrix}I & \ScriptB K\end{bmatrix}\Sigma_{XX}\begin{bmatrix}I \\ K^\top \ScriptB^\top \end{bmatrix} E_N^\top, \label{eq:OCSIC_TerminalCovConst}
		\end{align}
	\end{subequations}
	where $\Omega\in\Re^{2(N+1)n_x\times N_c}$ is a decision (slack) variable, 
	\begin{align}
		&\Sigma_{XX} = \nonumber\\
		&\begin{bmatrix}\ScriptA & \\ & \ScriptA\end{bmatrix}
		\begin{bmatrix}\Sigma_0 & \Expectation[\zeta_0\varphi(\zeta_0)^\top]\\ \Expectation[\varphi(\zeta_0)\zeta_0^\top] & \Expectation[\varphi(\zeta_0)\varphi(\zeta_0)^\top]\end{bmatrix}
		\begin{bmatrix}\ScriptA^\top & \\ & \ScriptA^\top\end{bmatrix}\nonumber \\
		& + \begin{bmatrix}\ScriptD & \\ & \ScriptD\end{bmatrix}
		\begin{bmatrix}\Expectation[WW^\top] & \Expectation[W\varphi(W)^\top]\\ 
		\Expectation[\varphi(W)W^\top] & \Expectation[\varphi(W)\varphi(W)^\top]\end{bmatrix}
		\begin{bmatrix}\ScriptD^\top & \\ & \ScriptD^\top\end{bmatrix},\label{eq:SigmaXX} 
	\end{align}
	\begin{align}
		&\Sigma_{UU} = 
		\ScriptA \Expectation[\varphi(\zeta_0)\varphi(\zeta_0)^\top]\ScriptA^\top + \ScriptD\Expectation[\varphi(W)\varphi(W)^\top]\ScriptD^\top.\label{eq:SigmaUU}
	\end{align}
	Furthermore, 
	\begin{subequations}\label{eq:Hh}
		\begin{align}
		H &= \begin{bmatrix}
		\alpha_{u,0},&
		\cdots,&
		\alpha_{u, N_c-1}
		\end{bmatrix}^\top \in \Re^{N_c\times n_u}, \\
		h &= \begin{bmatrix}
		\beta_{u,0},&
		\cdots &
		\beta_{u, N_c-1}
		\end{bmatrix}^\top \in \Re^{N_c},
		\end{align}
	\end{subequations}
	and
	\begin{align*}
		V &= \begin{bmatrix}
			v_0^\top &\cdots& v_{N-1}^\top
			\end{bmatrix}^\top,\\
		K &= \begin{bmatrix}
		\mathtt{blkdiag}(K_0,K_1,\ldots,K_{N-1}) & 0_{Nn_u\times n_x}
		\end{bmatrix}.
	\end{align*}
	
	In addition, $S \in \Re^{2(N+1)n_x\times(N+1)n_x}$ and $\sigma \in \Re^{2(N+1)n_x}$ are constant.
	Specifically, for $i = 1,\ldots, (N + 1)n_x$
	\begin{subequations}\label{eq:Ssigma}
		\begin{align}
		S_{2i-1} &= e_{2i-1}^\top,\	S_{2i} = -e_{2i}^\top,\\
		\sigma_{2i-1} &= \zeta_i^{\rm max},\ \sigma_{2i} = \zeta_i^{\rm max}.
		\end{align}
	\end{subequations}
	where $S_i$ denotes the $i$th row of $S$, and $e_i \in \Re^{2(N+1)n_x}$ is a unit vector with $i$th element 1.
\end{Theorem}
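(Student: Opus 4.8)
The plan is to substitute the control law~(\ref{eq:inputConstrainedCSController}) into the concatenated dynamics~(\ref{eq:X=Ax0+BU+DW}) and then reduce each ingredient of Problem~(\ref{prob:OptimalCovarianceSteeringConvertedInputConstrained}) --- the quadratic cost, the state chance constraints, the terminal moment constraints, and the input hard constraints --- to the deterministic, convex form claimed in~(\ref{prob:OptimalCovarianceSteeringFinalInputConstrained}). First I would collect the saturated quantities into a single random vector $\xi = [\varphi(\zeta_0)^\top\ \varphi(W)^\top]^\top$ and observe that the stacked feedback state is a linear image $Z = [\ScriptA\ \ScriptD]\xi$, matching the transition operators appearing in~(\ref{eq:SigmaXX})--(\ref{eq:SigmaUU}), so that $U = V + K[\ScriptA\ \ScriptD]\xi$ and $X = \ScriptA x_0 + \ScriptB V + \ScriptB K[\ScriptA\ \ScriptD]\xi + \ScriptD W$. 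The key structural fact --- and the reason the \emph{symmetric} saturation~(\ref{eq:saturationFunc}) is used --- is that $\zeta_0$ and $W$ are zero-mean and symmetric, so $\Expectation[\varphi(\zeta_0)] = 0$ and $\Expectation[\varphi(W)] = 0$; consequently $\Expectation[U] = V$ and $\Expectation[X] = \ScriptA\mu_0 + \ScriptB V$, i.e. the mean (feedforward) channel is governed solely by $V$ while the feedback gain $K$ affects only the fluctuations. This decoupling is what ultimately makes the problem convex.

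Next I would compute the second moments. Writing $J(U) = \Expectation[X]^\top\bar Q\,\Expectation[X] + \trace(\bar Q\,\mathrm{Cov}(X)) + \Expectation[U]^\top\bar R\,\Expectation[U] + \trace(\bar R\,\mathrm{Cov}(U))$, I would substitute the means from the previous step. Because $\zeta_0$ and $W$ are independent and $\varphi(\zeta_0),\varphi(W)$ have zero mean, all cross blocks between the initial-condition and the disturbance contributions vanish, which yields $\mathrm{Cov}(U) = K\Sigma_{UU}K^\top$ with $\Sigma_{UU}$ exactly as in~(\ref{eq:SigmaUU}). For the state, note that $X - \Expectation[X] = (\ScriptA\zeta_0 + \ScriptD W) + \ScriptB K Z$, so $\mathrm{Cov}(X) = [I\ \ScriptB K]\,\Sigma_{XX}\,[I\ \ScriptB K]^\top$, where $\Sigma_{XX}$ is the joint covariance of the open-loop deviation $\ScriptA\zeta_0 + \ScriptD W$ and the feedback variable $Z$; expanding this joint covariance and again dropping the vanishing cross terms reproduces~(\ref{eq:SigmaXX}). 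Substituting these expressions into $J$ gives the cost~(\ref{eq:finalCostFunc}), and applying the selector $E_N$ to $\Expectation[X]$ and $\mathrm{Cov}(X)$ yields the terminal-mean constraint and the relaxed terminal-covariance LMI~(\ref{eq:OCSIC_TerminalCovConst}).

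For the state chance constraints I would emphasize that, unlike the unsaturated case, $X$ is \emph{not} Gaussian, since $\xi$ is a truncated (saturated) Gaussian; hence the exact Gaussian inverse CDF is unavailable. Instead I would bound~(\ref{eq:CCInputContConvert}) using the one-sided Chebyshev (Cantelli) inequality, which requires only the mean and variance of the scalar $\alpha_{x,j}^\top E_k X$. Taking its mean as $\alpha_{x,j}^\top E_k(\ScriptA\mu_0 + \ScriptB V)$ and its variance as $\|\Sigma_{XX}^{1/2}[I\ \ScriptB K]^\top E_k^\top\alpha_{x,j}\|^2$, the Cantelli bound at level $p_{x,j}$ produces precisely the multiplier $\sqrt{(1-p_{x,j})/p_{x,j}}$ and the second-order cone constraint~(\ref{eq:OCSFIC_SC}).

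The step I expect to be the crux is the input hard constraint, since it must hold for \emph{every} realization of the noise rather than in expectation or in probability. The saturation is exactly what makes this tractable: by construction each component of $\xi$ obeys $-\zeta_i^{\rm max}\le\xi_i\le\zeta_i^{\rm max}$, so $\xi$ is confined to the box $\{\xi: S\xi\le\sigma\}$ with $S,\sigma$ as in~(\ref{eq:Ssigma}). The hard constraint $\alpha_{u,s}^\top F_k U\le\beta_{u,s}$ stacked over $s$ then becomes the robust linear requirement $HF_kV + HF_kK[\ScriptA\ \ScriptD]\xi\le h$ for all $\xi$ with $S\xi\le\sigma$. I would dualize this worst-case constraint row by row: since the box is bounded, strong LP duality applies, and introducing the nonnegative dual (slack) matrix $\Omega$ turns the inner maximization into the pair of constraints $HF_kK[\ScriptA\ \ScriptD]=\Omega^\top S$ and $HF_kV+\Omega^\top\sigma\le h$ with $\Omega\ge 0$, exactly as claimed. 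Finally I would verify convexity: with $\bar Q,\bar R,\Sigma_{XX},\Sigma_{UU}\succeq 0$ the cost is a convex quadratic, separable in $V$ and $K$; constraint~(\ref{eq:OCSFIC_SC}) is a second-order cone constraint; the terminal-covariance constraint is an LMI; and all remaining constraints are linear in $(V,K,\Omega)$, so the reformulated Problem~(\ref{prob:OptimalCovarianceSteeringFinalInputConstrained}) is a convex program.
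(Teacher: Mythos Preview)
Your proposal is correct and follows essentially the same route as the paper: exploit the oddness of the saturation to obtain $\Expectation[\varphi(\zeta_0)]=\Expectation[\varphi(W)]=0$, decouple the mean and covariance of $X$ and $U$ into the forms involving $\Sigma_{XX},\Sigma_{UU}$, apply the Chebyshev--Cantelli inequality for the chance constraints, and replace the robust input inequality over the box $\{S\xi\le\sigma\}$ by its LP dual with multiplier $\Omega$. The only differences are presentational---you spell out the LP-duality step and the final convexity check explicitly, whereas the paper defers the former to a cited reference and leaves the latter implicit.
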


\begin{proof}
	It follows from~(\ref{eq:zDynamics}) and (\ref{eq:zInit}) that 
	\begin{align}
		Z = \ScriptA \varphi(\zeta_0) + \ScriptD \varphi(W),
	\end{align}
	where $ Z = \begin{bmatrix} z_0^\top & \ldots & z_N^\top \end{bmatrix}^\top \in \Re^{(N+1)n_x}$.
	In addition, $U$ is represented as $U = V + K Z$,
	and thus,
	\begin{align} \label{eq:newControlPolicy}
		U = V + K(\ScriptA \varphi(\zeta_0) + \ScriptD \varphi(W)).
	\end{align}
	Since the saturation function~(\ref{eq:saturationFunc}) is an odd function and $\zeta_0$ is zero-mean Gaussian distributed,
	\begin{align*}
		\Expectation[\varphi(\zeta_0)] = 0,\qquad
		\Expectation[\varphi(W)] = 0, 
	\end{align*}
	and hence,
	\begin{align*}
		\Expectation[U] = V,\quad
		\tilde{U} = U - \Expectation[U]
		=K(\ScriptA \varphi(\zeta_0) + \ScriptD \varphi(W)).
	\end{align*}
	Thus, it follows from~(\ref{eq:X=Ax0+BU+DW}) that 
	\begin{align}
		\bar{X} &= \Expectation[X] = \ScriptA \mu_0 + \ScriptB V,\label{eq:barXVK}\\
		\tilde{X} &= X -\Expectation[X] \nonumber \\
		&\hspace{-15pt}= \begin{bmatrix}I & \ScriptB K\end{bmatrix} \left(
		\begin{bmatrix}\ScriptA & \\ & \ScriptA\end{bmatrix}
		\begin{bmatrix}\zeta_0 \\ \varphi(\zeta_0)\end{bmatrix}
		+ 
		\begin{bmatrix}\ScriptD & \\ & \ScriptD\end{bmatrix}
		\begin{bmatrix}W \\ \varphi(W)\end{bmatrix}          
		\right).\label{eq:tildeXVK}  
	\end{align}
	It also follows from~(\ref{eq:Ex0x0x0WWW}),~(\ref{eq:zInit}),~(\ref{eq:SigmaXX}), and~(\ref{eq:SigmaUU}) that 
	\begin{align}
		\Expectation[\tilde{X}\tilde{X}^\top] &= \begin{bmatrix}I & \ScriptB K\end{bmatrix}	\Sigma_{XX}	\begin{bmatrix}I \\ K^\top \ScriptB^\top\end{bmatrix},\label{eq:EXX}\\
		\Expectation[\tilde{U}\tilde{U}^\top] &=  K\Sigma_{UU} K^\top.\label{eq:EUU}
	\end{align}
	Following~\cite{okamoto2018Optimal}, it can be shown that the cost function~(\ref{eq:ObjInputConstConvert1}) may be written as 
	\begin{align} \label{eq:objInputConstConvert2}
		J(V,\tilde{U}) &= \trace(\bar{Q}\Expectation[\tilde{X}\tilde{X}^\top]) + \bar{X}^\top \bar{Q} \bar{X} \nonumber \\
		&\hspace{20pt}+ \trace(\bar{R}\Expectation[\tilde{U}\tilde{U}^\top]) + V^\top \bar{R} V.
	\end{align}
	Using~(\ref{eq:barXVK}), (\ref{eq:EXX}), and (\ref{eq:EUU}), we can further convert~(\ref{eq:objInputConstConvert2}) to~(\ref{eq:finalCostFunc}).
	
	Next, we discuss the conversion of the chance constraint~(\ref{eq:CCInputContConvert}) to a more amenable form to facilitate the numerical solution of the optimization problem (\ref{prob:OptimalCovarianceSteeringFinalInputConstrained}).
	First, notice that $\alpha_j^\top E_k X$ is a univariate random variable with mean $\alpha_{x,j}^\top E_k \Expectation[X]$ and variance $\alpha_{x,j}^\top E_k \Expectation[\tilde{X}\tilde{X}^\top] E_k^\top \alpha_{x,j}$.
	It follows from the Chebyshev-Cantelli inequality~(Lemma~\ref{Lemma:Cantelli} in the Appendix) that 
	\begin{align*}
	&\Pr\bigg(\alpha_{x,j}^\top E_k X \leq \alpha_{x,j}^\top E_k \Expectation[X] \nonumber\\ &\hspace{20pt}+\sqrt{\frac{1-p_{x,j}}{p_{x,j}}}\sqrt{ \alpha_{x,j}^\top E_k \Expectation[\tilde{X}\tilde{X}^\top]E_k^\top\alpha_{x,j}^\top}\bigg) > 1-p_{x,j}.
	\end{align*}
	Therefore, the inequality~(\ref{eq:CCInputContConvert}) is satisfied if
	\begin{align*}
	\alpha_{x,j}^\top E_k \Expectation[X] +\sqrt{\frac{1-p_{x,j}}{p_{x,j}}}\sqrt{ \alpha_{x,j}^\top E_k \Expectation[\tilde{X}\tilde{X}^\top]E_k^\top\alpha_{x,j}^\top} \leq \beta_{x,j},
	\end{align*}
	is satisfied, which is, from (\ref{eq:barXVK}), (\ref{eq:tildeXVK}), and a similar discussion to~\cite{okamoto2018Optimal}, equivalent to a second order cone constraint in terms of $V$ and $K$~(\ref{eq:OCSFIC_SC}).

	In addition, using~(\ref{eq:barXVK}) and (\ref{eq:EXX}), the terminal state constraints~(\ref{eq:TermMeanInputConstConvert}) and (\ref{eq:TermCovInputConstConvert}) can be written as
	\begin{subequations}
		\begin{align}
		\mu_f &= E_N\left(\ScriptA \mu_0 + \ScriptB V\right),\\
		\Sigma_f &\succeq E_N \begin{bmatrix}I & \ScriptB K\end{bmatrix}
		\Sigma_{XX}
		\begin{bmatrix}I \\ K^\top \ScriptB^\top \end{bmatrix} E_N^\top.\label{eq:XNNew}
		\end{align}
	\end{subequations}
	Using the Schur complement, (\ref{eq:XNNew}) can be further converted to 
	\begin{align*}
		\begin{bmatrix}
		\Sigma_f & E_N \begin{bmatrix}I & \ScriptB K\end{bmatrix}\Sigma_{XX}^{1/2}\\
		\Sigma_{XX}^{1/2}	\begin{bmatrix}I & \ScriptB K\end{bmatrix}^\top E_N^\top & I 
		\end{bmatrix} \succeq 0.
	\end{align*}
	Note that $\Sigma_{XX}\succeq 0$ (see Lemma~\ref{lemma:SigmaXXisPositiveSemiDef} in the Appendix).

	Finally, we rewrite the input hard constraint~(\ref{eq:ICInputConstConvert}) as follows.
	Using~(\ref{eq:Hh}), we first rewrite~(\ref{eq:ICInputConstConvert}) to the following equivalent form
	\begin{equation}\label{eq:InputConstraintsHu<h}
		H F_k U \leq h.
	\end{equation}
	Then, using~(\ref{eq:newControlPolicy}), this inequality is further converted to
	\begin{equation}
	HF_k\left(V + K \left(\mathcal{A}\varphi(\zeta_0) + \mathcal{D}\varphi(W)\right) \right) \leq h,
	\end{equation}
	and thus, 
	\begin{equation} \label{eq:InputHardConstRaw}
	HF_kV + HF_kK \begin{bmatrix}	\mathcal{A} & \mathcal{D} \end{bmatrix}\begin{bmatrix}
	\varphi(\zeta_0) \\\varphi(W) \end{bmatrix}  \leq h.
	\end{equation}
	In addition, using~(\ref{eq:Ssigma}), the condition~($\ref{eq:saturationFunc}$) can be represented as 
	\begin{equation}
	S \begin{bmatrix} \varphi(\zeta_0) \\\varphi(W) \end{bmatrix} \leq \sigma.
	\end{equation}
	It follows from the discussion in~\cite{paulson2017stochastic} that the constraint~(\ref{eq:InputHardConstRaw}) can be converted to 
	\begin{subequations}
		\begin{align}
		&HF_kV + \Omega^\top\sigma \leq h,\\
		&HF_kK[
		\ScriptA\ \ScriptD
		] = \Omega^\top S,\\
		&\Omega \geq 0.
		\end{align}
	\end{subequations}
	In summary, we have converted Problem~(\ref{prob:OptimalCovarianceSteeringConvertedInputConstrained}) to Problem~(\ref{prob:OptimalCovarianceSteeringFinalInputConstrained}).
\end{proof}

Note that the values of $\Expectation[\zeta_0\varphi(\zeta_0)^\top]$, $\Expectation[\varphi(\zeta_0)\varphi(\zeta_0)^\top]$, $\Expectation[W\varphi(W)^\top]$, and $\Expectation[\varphi(W)\varphi(W)^\top]$ can be obtained using Monte Carlo or Lemma~\ref{lemma:Integrals} in the Appendix.

\begin{Remark}
	When designing $u_k$ in~(\ref{eq:inputConstrainedCSController}), the value of $z_k$ can be obtained from $z_{k-1}$ and $\varphi(w_{k-1})$.
	The noise $w_{k-1}$ is obtained from the system dynamics~(\ref{eq:SystemDynamics}), i.e., 
	\begin{align}
	w_{k-1} = x_{k} - A_{k-1}x_{k-1} - B_{k-1}u_{k-1},
	\end{align}
	and thus, $\varphi(w_{k-1})$ can be computed before computing the value of $u_k$ at time step $k$. 
\end{Remark}

\begin{Remark}
	Instead of~(\ref{eq:originalCostFunc}) or (\ref{eq:finalCostFunc}), one can separately design the mean and covariance steering cost, i.e.,  	
	\begin{align*}
		&J(V,K) = \trace\left(\bar{Q}_v \begin{bmatrix}I & \ScriptB K\end{bmatrix}
		\Sigma_{XX}
		\begin{bmatrix}I & \ScriptB K\end{bmatrix}^\top\right) + V^\top\bar{R}_mV \nonumber \\
		&+(\ScriptA \mu_0 + \ScriptB V)^\top\bar{Q}_m(\ScriptA \mu_0 + \ScriptB V) + \trace\left(\bar{R}_vK\Sigma_{UU} K^\top\right),
	\end{align*}
	where $\bar{Q}_m$ $\in$ $\Re^{(N+1)n_x\times(N+1)n_x}$, $\bar{Q}_v$ $\in$ $\Re^{(N+1)n_x\times(N+1)n_x}$, $\bar{R}_m$ $\in$ $\Re^{Nn_u\times Nn_u}$, and $\bar{R}_v$ $\in$ $\Re^{Nn_u\times Nn_u}$ are block diagonal matrices, e.g., $\bar{Q}_m = \mathtt{blkdiag}(Q_m$,$\ldots$, $Q_m, 0)$. 
\end{Remark}

\begin{Remark}
	Because the terminal covariance constraint~(\ref{eq:OCSIC_TerminalCovConst}) can be formulated as a linear matrix inequality (LMI) constraint, in order to solve Problem (\ref{prob:OptimalCovarianceSteeringFinalInputConstrained}), we need to use an SDP solver such as MOSEK~\cite{mosek}.
\end{Remark}

\section{Numerical Simulation\label{sec:Numerical Simulation}}
In this section we validate the proposed OCS algorithm using a numerical example. 
We consider a path-planning problem for a vehicle under the following double integrator dynamics with $x_k = [x,y,v_x,v_y]^\top\in \Re^{4}$, $u_k =[a_x,a_y]^\top\in \Re^{2}$, $w_k \in \Re^{4}$ and
\begin{align*}
	&A = \begin{bmatrix}
	1 & 0 & \Delta t & 0\\
	0 & 1 & 0 & \Delta t\\
	0 & 0 & 1 & 0\\
	0 & 0 & 0 & 1
	\end{bmatrix},\;
	B = \begin{bmatrix}
	\Delta t^2/2 & 0\\
	0 &\Delta t^2/2\\
	\Delta t & 0\\
	0 & \Delta t
	\end{bmatrix},
\end{align*}
and $D = \mathtt{blkdiag}(0.01, 0.01, 0.01, 0.01)$,
where $\Delta t$ is the time-step size set to $\Delta t = 0.2$. 
The feasible state space is 
\begin{align*}
	0.2(x-1) \leq y \leq -0.2(x-1).
\end{align*}
The mean and the covariance of the initial and target terminal distributions are set to
\begin{align*}
	&\mu_0 = [-10, 1, 0, 0],\;\Sigma_0 = \mathtt{blkdiag}(0.05, 0.05, 0.01, 0.01),\\
	&\mu_f = [0, 0, 0, 0], \; \Sigma_f = \mathtt{blkdiag}(0.025, 0.025, 0.005, 0.005).
\end{align*}
The cost function weights are chosen as 
\begin{align*}
	Q = \mathtt{blkdiag}(0.5,4.0,0.05,0.05),\; R = \mathtt{blkdiag}(20,20).
\end{align*}
The horizon is set to $N=20$, and the probability threshold to $p_{x,j} = 0.05$ for $j = 0, 1$.
Finally, we restrict the maximum acceleration at $U_{\rm max}= $ 2.9 $\mathrm{m/s^2}$ along each axis, i.e.,
\begin{align}\label{eq:controlConstraintEq}
	\| u_k \|_\infty \leq U_{\rm max},
\end{align}
for $k= 0,\ldots,N-1$.
We also set the saturation function~(\ref{eq:saturationFunc}) to be saturated when the input exceeds the 3$\sigma$ values.
We employ YALMIP~\cite{yalmip} along with MOSEK~\cite{mosek} to solve this problem.

We first show the results when the controller from~\cite{okamoto2018Optimalb} is used in Fig.~\ref{fig:ResultsFree}.
The trajectories are depicted in Fig.~\subref*{fig:Traj_F}.
Red circles denote the initial and the target distributions, and blue ellipses represent the 3$\sigma$ confidence region at each time step. 
We also illustrate randomly picked 100 trajectories with gray lines and observe that the state chance constraints are satisfied. 
Note, however, that the controller in~\cite{okamoto2018Optimalb} cannot deal with input hard constraints.
Figure~\subref*{fig:ACC_F} depicts the acceleration commands of the 100 sample trajectories with gray lines along with the acceleration limits $\pm$ 2.9 $\mathrm{m/s^2}$ with red dashed lines. 
The cost for this scenario is 2,285.

Next, we show the results when the newly developed OCS controller in Theorem~\ref{theorem:InputConstrainedController} is used. 
Figure~\ref{fig:ResultsConstrained} depicts the results. 
While, as shown in Fig.~\subref*{fig:Traj_C}, the state chance constraints are satisfied, the control commands depicted in Fig.~\subref*{fig:ACC_C} satisfy the input hard constraint~(\ref{eq:controlConstraintEq}).
The cost for this constrained scenario is 2,301, which is, as expected, slightly larger than the cost for the unconstrained case owing to the additional input constraint~(\ref{eq:controlConstraintEq}).

\begin{figure}
	\centering
	\subfloat[Trajectories.\label{fig:Traj_F}]{\centering\includegraphics[width=0.9\columnwidth]{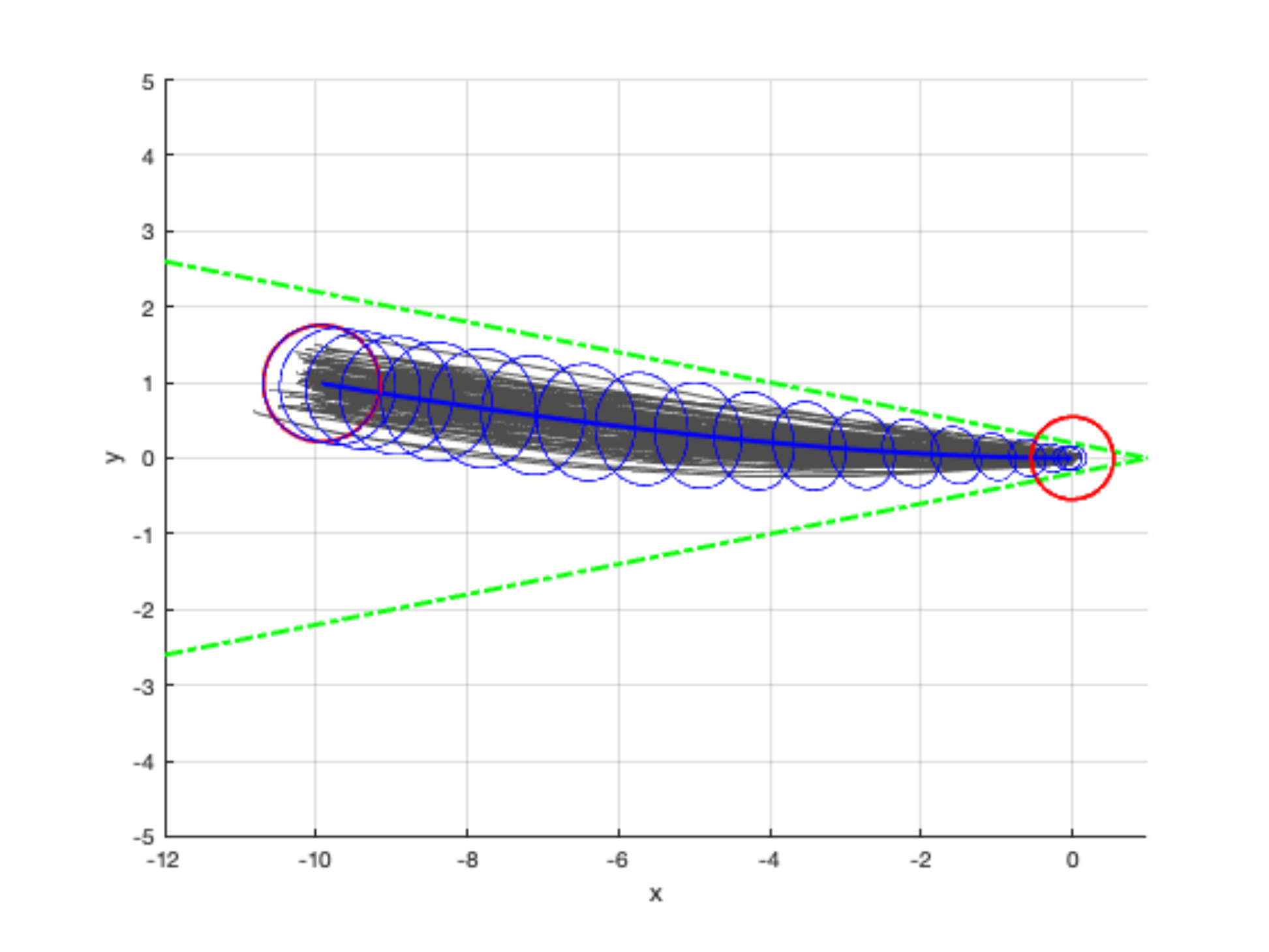}}\\
	\subfloat[Input acceleration commands.\label{fig:ACC_F}]{\centering\includegraphics[width=0.9\columnwidth]{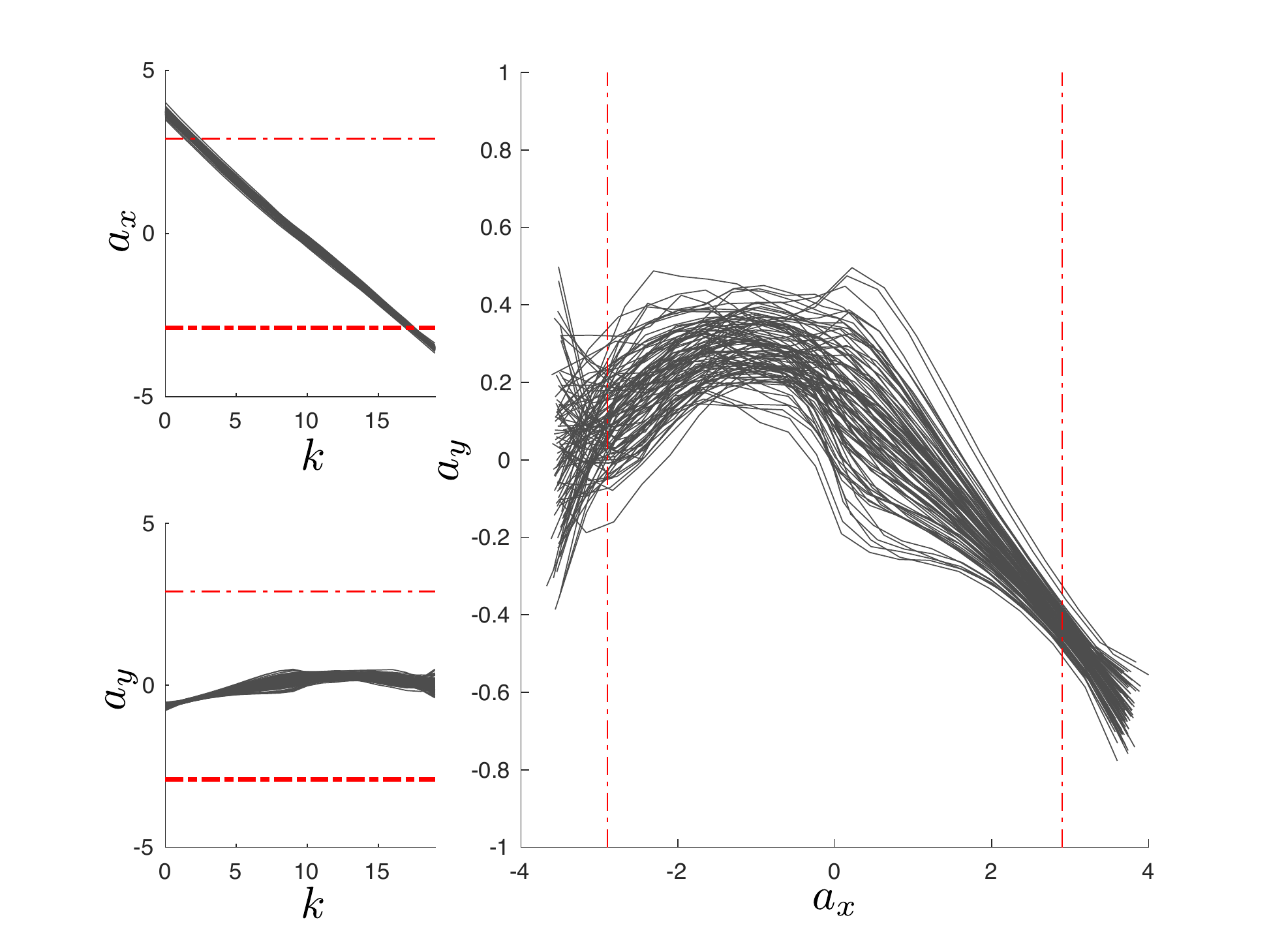}}
	\caption{Results when input is constraint free~\cite{okamoto2018Optimalb}.\label{fig:ResultsFree}}
\end{figure}

\begin{figure}
	\centering
	\subfloat[Trajectories.\label{fig:Traj_C}]{\centering\includegraphics[width=0.9\columnwidth]{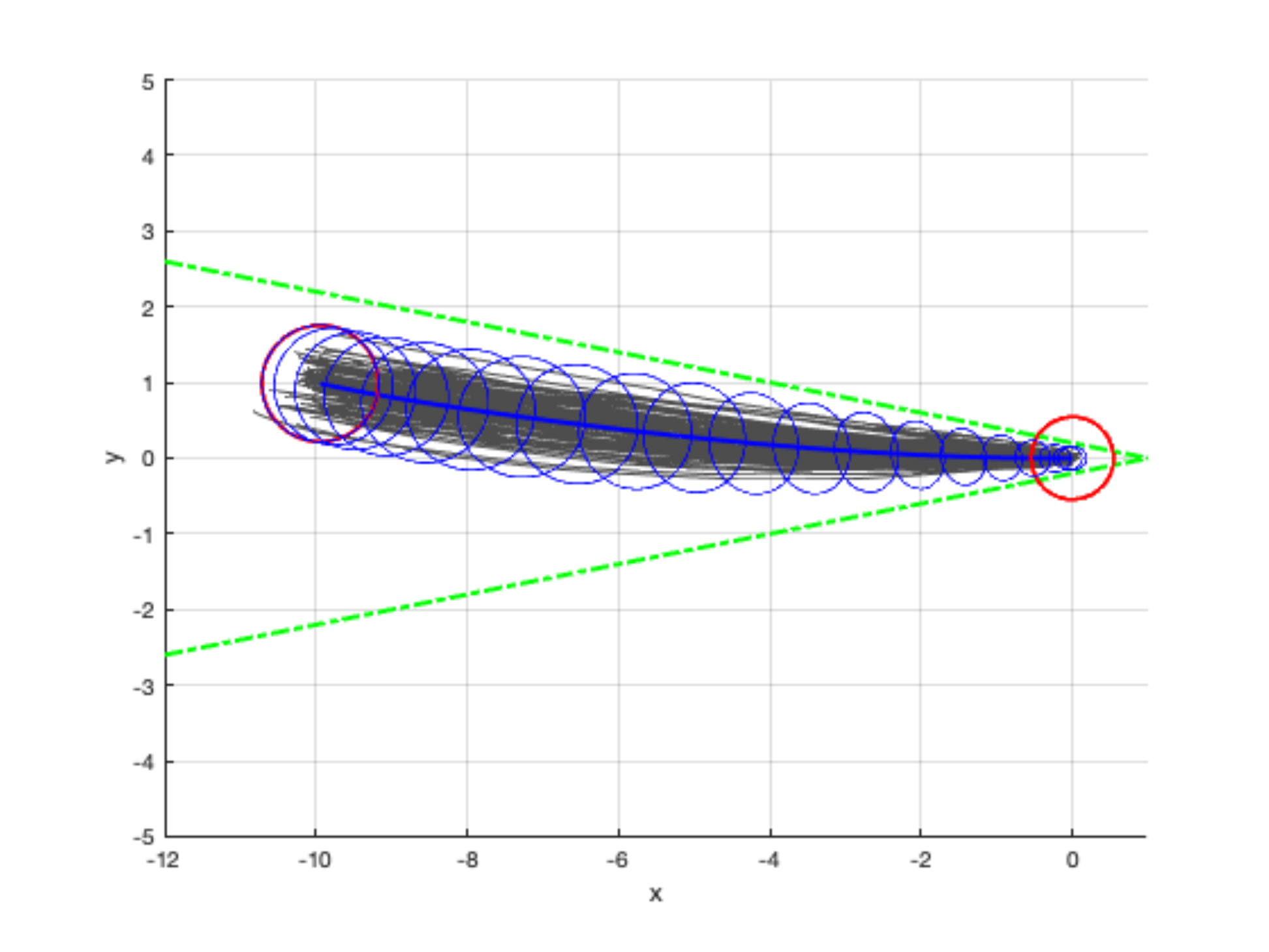}}\\
	\subfloat[Input acceleration commands.\label{fig:ACC_C}]{\centering\includegraphics[width=0.9\columnwidth]{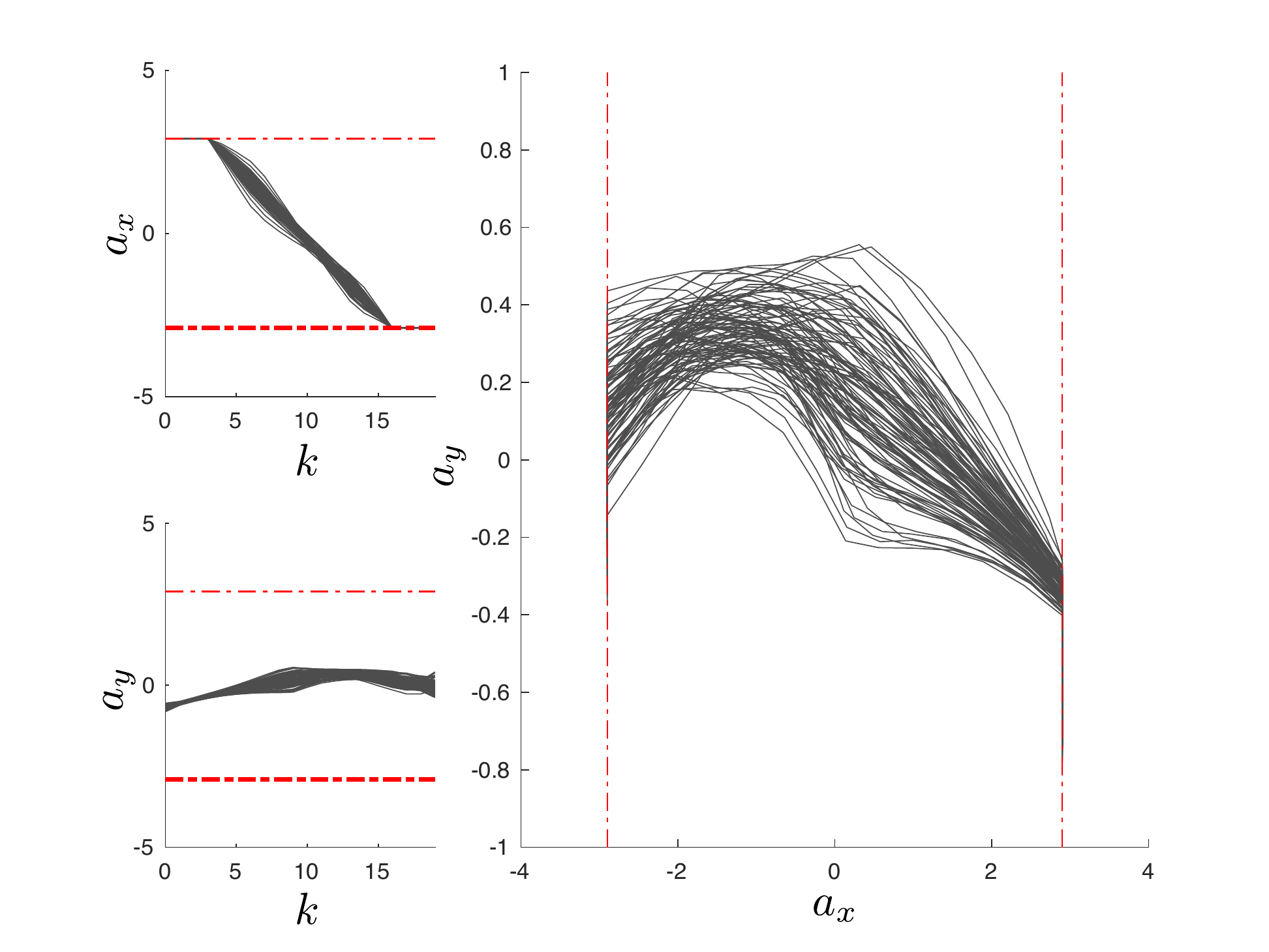}}
	\caption{Results when input constraints are imposed.\label{fig:ResultsConstrained}}
\end{figure}

\section{Summary}\label{sec:Summary}

In this work, we have addressed the problem of OCS under state chance constraints and input hard constraints.
Similarly to our previous works~\cite{okamoto2018Optimal,okamoto2018Optimalb}, we solved this problem by converting the original problem into a convex programing problem.
The input hard constraints are formulated using saturation functions to limit the effect of possibly unbounded disturbance.
Numerical simulations show that the proposed algorithm successfully constructs control commands that satisfy the state and input constraints.
Future work includes OCS with measurement noise. 

\vspace*{-1ex}
\bibliographystyle{IEEEtran}
\bibliography{InputConstrainedCS}

\section*{APPENDIX}

\begin{Lemma}\label{lemma:Integrals}
	If a random variable $z\in\Re$ is sampled from a Gaussian distribution $\mathcal{N}(0, \sigma_z^2)$ and the saturation function $\varphi(\cdot)$ is such that 
	\begin{align*}
		\varphi(z) = \max(-\zeta, \min(z, \zeta)),
	\end{align*}
	where $\zeta > 0$ is a constant, then 
	\begin{align*}
	&\Expectation[\varphi(z)^2] =\zeta^2 + (\sigma_z^2 - \zeta^2)\erf\left(\frac{\zeta}{\sqrt{2}\sigma_z}\right) - \frac{2\zeta\sigma_z}{\sqrt{2\pi}}e^{-\frac{\zeta^2}{2\sigma_z^2}}.\\
	&\Expectation[z\varphi(z)] = \sigma_z^2\erf\left(\frac{\zeta}{\sqrt{2}\sigma_z}\right).\label{eq:Ephizz}\\
	\end{align*}
\end{Lemma}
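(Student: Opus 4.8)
The plan is to compute both expectations directly by integrating the relevant integrand against the zero-mean Gaussian density $f(z) = \frac{1}{\sqrt{2\pi}\sigma_z}\exp(-z^2/(2\sigma_z^2))$, exploiting the fact that $\varphi$ is piecewise-defined on the three intervals $(-\infty,-\zeta]$, $[-\zeta,\zeta]$, and $[\zeta,\infty)$. On the two outer intervals $\varphi(z) = \mp\zeta$ is constant, while on the central interval $\varphi(z) = z$. First I would split each expectation integral into the corresponding three pieces and then use the evenness of $f$ to merge the two tail integrals.

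For $\Expectation[z\varphi(z)]$, the split gives $\int_{-\infty}^{-\zeta}(-\zeta z)f\,dz + \int_{-\zeta}^{\zeta}z^2 f\,dz + \int_{\zeta}^{\infty}\zeta z f\,dz$. Since $f$ is even, the two tail integrals coincide and combine into $2\zeta\int_{\zeta}^{\infty} z f\,dz$. Using the elementary identity $z f(z) = -\sigma_z^2 f'(z)$, this tail integral evaluates in closed form to $\frac{\sigma_z}{\sqrt{2\pi}}\exp(-\zeta^2/(2\sigma_z^2))$. The central truncated second moment $\int_{-\zeta}^{\zeta}z^2 f\,dz$ I would handle by the same identity together with one integration by parts, producing $\sigma_z^2\erf(\zeta/(\sqrt{2}\sigma_z)) - \frac{2\zeta\sigma_z}{\sqrt{2\pi}}\exp(-\zeta^2/(2\sigma_z^2))$, where I use $\int_{-\zeta}^{\zeta}f\,dz = \erf(\zeta/(\sqrt{2}\sigma_z))$. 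Adding the pieces, the two exponential terms cancel exactly and leave $\Expectation[z\varphi(z)] = \sigma_z^2\erf(\zeta/(\sqrt{2}\sigma_z))$.

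For $\Expectation[\varphi(z)^2]$, the analogous split reduces to $\int_{|z|>\zeta}\zeta^2 f\,dz + \int_{-\zeta}^{\zeta} z^2 f\,dz$. The first term equals $\zeta^2\Pr(|z|>\zeta) = \zeta^2\bigl(1 - \erf(\zeta/(\sqrt{2}\sigma_z))\bigr)$, and the second term is precisely the truncated second moment computed above. Combining the two and grouping the $\erf$ contributions yields $\zeta^2 + (\sigma_z^2 - \zeta^2)\erf(\zeta/(\sqrt{2}\sigma_z)) - \frac{2\zeta\sigma_z}{\sqrt{2\pi}}\exp(-\zeta^2/(2\sigma_z^2))$, as claimed.

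There is no genuine obstacle here; the computation is elementary. The only points requiring care are the bookkeeping of the three regions (in particular using the evenness of $f$ to merge the tails), the single integration by parts for the central second moment, and consistency with the error-function convention, namely $\int_{0}^{\zeta}f\,dz = \frac{1}{2}\erf(\zeta/(\sqrt{2}\sigma_z))$ and hence $\Pr(z>\zeta) = \frac{1}{2}\bigl(1 - \erf(\zeta/(\sqrt{2}\sigma_z))\bigr)$. Under a different normalization of $\erf$ the constant factors would shift, so I would fix this convention at the outset to keep the cancellations transparent.
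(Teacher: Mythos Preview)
Your proposal is correct and is exactly the standard integral-calculus computation the paper has in mind; the paper's own proof is simply the one-line remark that the result ``is straightforward from standard integral calculus,'' so you have in fact supplied more detail than the authors. The three-region split, the use of $zf(z)=-\sigma_z^2 f'(z)$, and the single integration by parts for the truncated second moment are precisely the steps that make this computation elementary, and your cancellations check out.
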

\begin{proof}
	The proof is straightforward from standard integral calculus. 
\end{proof}

\begin{Lemma}\label{lemma:SigmaXXisPositiveSemiDef}
	The matrix $\Sigma_{XX}$ defined as
	\begin{align*}
	&\Sigma_{XX} =\nonumber \\
	&\begin{bmatrix}\ScriptA & \\ & \ScriptA\end{bmatrix}
	\begin{bmatrix}\Expectation[\zeta_0\zeta_0^\top] & \Expectation[\zeta_0\varphi(\zeta_0)^\top]\\ \Expectation[\varphi(\zeta_0)\zeta_0^\top] & \Expectation[\varphi(\zeta_0)\varphi(\zeta_0)^\top]\end{bmatrix}
	\begin{bmatrix}\ScriptA^\top & \\ & \ScriptA^\top\end{bmatrix}\nonumber \\
	&+ \begin{bmatrix}\ScriptD & \\ & \ScriptD\end{bmatrix}
	\begin{bmatrix}\Expectation[WW^\top] & \Expectation[W\varphi(W)^\top]\\ \Expectation[\varphi(W)W^\top] & \Expectation[\varphi(W)\varphi(W)^\top]\end{bmatrix}
	\begin{bmatrix}\ScriptD^\top & \\ & \ScriptD^\top\end{bmatrix}.
	\end{align*}
	is symmetric positive definite. 
\end{Lemma}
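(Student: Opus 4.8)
The plan is to recognize the two inner block matrices as honest second-moment (Gram) matrices of augmented random vectors and then propagate positivity through the two congruence transformations. Writing $\xi := [\zeta_0^\top,\ \varphi(\zeta_0)^\top]^\top \in \Re^{2n_x}$ and $\eta := [W^\top,\ \varphi(W)^\top]^\top \in \Re^{2Nn_x}$, the middle factors in the definition of $\Sigma_{XX}$ are exactly $\Expectation[\xi\xi^\top]$ and $\Expectation[\eta\eta^\top]$. Because $\zeta_0$ and $W$ are zero-mean and $\varphi$ is odd, so that $\Expectation[\varphi(\zeta_0)] = \Expectation[\varphi(W)] = 0$ as already noted in the proof of Theorem~\ref{theorem:InputConstrainedController}, these second-moment matrices coincide with the covariance matrices of $\xi$ and $\eta$ and are therefore symmetric positive semidefinite. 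Consequently each of the two summands defining $\Sigma_{XX}$ has the congruence form $L\,\Expectation[\xi\xi^\top]\,L^\top$ and $M\,\Expectation[\eta\eta^\top]\,M^\top$ with $L = \mathtt{blkdiag}(\ScriptA,\ScriptA)$ and $M = \mathtt{blkdiag}(\ScriptD,\ScriptD)$, hence is positive semidefinite, and so is their sum. This already yields $\Sigma_{XX}\succeq 0$, which is all that is actually invoked in the body of the theorem.

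To obtain the stronger strict definiteness, I would show that the associated quadratic form vanishes only at the origin. For $v = [v_1^\top,\ v_2^\top]^\top$ put $\alpha := \ScriptA^\top v_1$, $\beta := \ScriptA^\top v_2$, $\gamma := \ScriptD^\top v_1$, $\delta := \ScriptD^\top v_2$; a direct expansion gives
\begin{equation*}
v^\top \Sigma_{XX} v = \Expectation\!\left[(\alpha^\top \zeta_0 + \beta^\top \varphi(\zeta_0))^2\right] + \Expectation\!\left[(\gamma^\top W + \delta^\top \varphi(W))^2\right].
\end{equation*}
Both expectations are nonnegative, so the form vanishes only if $\alpha^\top\zeta_0 + \beta^\top\varphi(\zeta_0) = 0$ and $\gamma^\top W + \delta^\top\varphi(W) = 0$ almost surely.

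The crux is then a two-scale argument exploiting the shape of the saturation map together with the full support of the Gaussian laws. On the event where every coordinate of $\zeta_0$ stays below its saturation level—an event of positive probability since $\zeta_0$ is Gaussian—one has $\varphi(\zeta_0) = \zeta_0$, forcing $(\alpha+\beta)^\top \zeta_0 = 0$ on a set with nonempty interior and hence $\alpha + \beta = 0$. Substituting $\beta = -\alpha$ and examining events on which a single coordinate saturates strictly then forces each entry of $\alpha$ to vanish, giving $\alpha = \beta = 0$; the identical argument applied to the disturbance term, using nondegeneracy of $\Expectation[WW^\top] = \mathtt{blkdiag}(D_0D_0^\top,\ldots,D_{N-1}D_{N-1}^\top)$, gives $\gamma = \delta = 0$. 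At this point $\ScriptA^\top v_i = 0$ and $\ScriptD^\top v_i = 0$ for $i = 1,2$, i.e. $[\ScriptA\ \ScriptD]^\top v_i = 0$. Since $[\ScriptA\ \ScriptD]$ is the square, $(N+1)n_x\times(N+1)n_x$ map taking the initial condition and the disturbance sequence to the state trajectory, it is invertible—its inverse simply reads off $w_k = x_{k+1} - A_k x_k$—so $v_1 = v_2 = 0$ and $\Sigma_{XX}\succ 0$.

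I expect the strict-definiteness step to be the only real obstacle: the positive-semidefinite claim is immediate from the Gram-matrix structure, whereas ruling out null directions requires the saturation nonlinearity in both regimes (its local identity behavior near zero and its genuine clipping on the tails) and, crucially, the nondegeneracy of the noise covariance. If some $D_k$ were rank deficient the step deducing $\gamma=\delta=0$ would break and one would recover only semidefiniteness; since the body of the paper uses $\Sigma_{XX}\succeq 0$, the safe route is to present the semidefinite conclusion and record the extra hypothesis of nonsingular $D_k$ under which it sharpens to strict definiteness.
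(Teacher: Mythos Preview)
Your semidefiniteness argument is exactly the paper's: both expand the quadratic form $v^\top\Sigma_{XX}v$ as $\Expectation[(\alpha^\top\zeta_0+\beta^\top\varphi(\zeta_0))^2]+\Expectation[(\gamma^\top W+\delta^\top\varphi(W))^2]\ge 0$, which is just your Gram-matrix/congruence observation written out componentwise. The paper's proof in fact stops there---despite the lemma's ``positive definite'' wording, only $\Sigma_{XX}\succeq 0$ is shown, and only $\succeq 0$ is used in the body.

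Your additional strict-definiteness sketch goes beyond the paper and is largely sound, but note one more hypothesis you would need to record alongside nonsingular $D_k$: the paper assumes only $\Sigma_0\succeq 0$, and your step ``$(\alpha+\beta)^\top\zeta_0=0$ on a set with nonempty interior, hence $\alpha+\beta=0$'' fails when $\Sigma_0$ is singular, since then $\zeta_0$ is supported on a proper subspace. So the sharpening to $\Sigma_{XX}\succ 0$ requires both $\Sigma_0\succ 0$ and each $D_kD_k^\top\succ 0$; under the paper's stated assumptions only the semidefinite conclusion is warranted, as you correctly anticipated.
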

\begin{proof}
	Consider vectors $x$ $\in$ $\Re^{(N+1)n_x}$ and  $y$ $\in$ $\Re^{(N+1)n_x}$.
	Then, 
	\begin{align*}
	&\begin{bmatrix} x^\top &y^\top\end{bmatrix} \Sigma_{XX} \begin{bmatrix}x\\ y\end{bmatrix}\nonumber \\
	&= \begin{bmatrix} x^\top \ScriptA &y^\top\ScriptA\end{bmatrix}
	\begin{bmatrix}\Expectation[\zeta_0\zeta_0^\top]  & \Expectation[\zeta_0\varphi(\zeta_0)^\top]\\ \Expectation[\varphi(\zeta_0)\zeta_0^\top] & \Expectation[\varphi(\zeta_0)\varphi(\zeta_0)^\top]\end{bmatrix}
	\begin{bmatrix}\ScriptA^\top x\\ \ScriptA^\top y\end{bmatrix} \nonumber \\
	& +\begin{bmatrix} x^\top \ScriptD &y^\top\ScriptD\end{bmatrix}
	\begin{bmatrix}\Expectation[WW^\top] & \Expectation[W\varphi(W)^\top]\\ \Expectation[\varphi(W)W^\top] & \Expectation[\varphi(W)\varphi(W)^\top]\end{bmatrix}
	\begin{bmatrix}\ScriptD^\top x\\ \ScriptD^\top y\end{bmatrix},\\
	&=\begin{bmatrix} x^\top \ScriptA &y^\top\ScriptA\end{bmatrix}
	\Expectation\left[\begin{bmatrix} \zeta_0\\\varphi(\zeta_0)\end{bmatrix}\begin{bmatrix} \zeta_0^\top&\varphi(\zeta_0)^\top\end{bmatrix}\right]
	\begin{bmatrix}\ScriptA^\top x\\ \ScriptA^\top y\end{bmatrix} \nonumber \\
	& + \begin{bmatrix} x^\top \ScriptD &y^\top\ScriptD\end{bmatrix}
	\Expectation\left[\begin{bmatrix} W\\\varphi(W)\end{bmatrix}\begin{bmatrix} W^\top&\varphi(W)^\top\end{bmatrix}\right]
	\begin{bmatrix}\ScriptD^\top x\\ \ScriptD^\top y\end{bmatrix}, 
	\end{align*}

	\begin{align*}
	&=\Expectation\left[(x^\top\ScriptA \zeta_0 + y^\top\ScriptA\varphi(\zeta_0))
	(\zeta_0^\top\ScriptA^\top x + \varphi(\zeta_0)^\top\ScriptA^\top y)\right] \nonumber \\
	& + \Expectation\left[(x^\top\ScriptD W + y^\top\ScriptD\varphi(W))
	(W^\top\ScriptD^\top x + \varphi(W)^\top\ScriptD^\top y)\right],\\
	&=\Expectation\left[(x^\top\ScriptA \zeta_0 + y^\top\ScriptA\varphi(\zeta_0))^2\right] \nonumber \\
	&\hspace{20pt}+ \Expectation\left[(x^\top\ScriptD W + y^\top\ScriptD\varphi(W))^2\right] \geq 0.
	\end{align*}

\end{proof}
\begin{Lemma}[Chebyshev-Cantelli inequality~\cite{marshall1960multivariate}\label{Lemma:Cantelli}]
	Let $z \in \Re$ be a random variable with mean $\mu_z$ and variance $\Sigma_z$. Then, for any $c\geq 0$, the following inequality holds
	\begin{align}
		\Pr(z \geq \mu_z + c) \leq \frac{\Sigma_z}{\Sigma_z + c^2}.
	\end{align} 
\end{Lemma}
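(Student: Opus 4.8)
The plan is to establish this one-sided tail bound (the Cantelli form of Chebyshev's inequality) by the classical \emph{shift-and-Markov} argument together with an optimization over an auxiliary parameter. First I would reduce to the centered case by setting $y = z - \mu_z$, so that $\Expectation[y] = 0$ and $\Expectation[y^2] = \Sigma_z$, and the claim becomes $\Pr(y \geq c) \leq \Sigma_z/(\Sigma_z + c^2)$. The degenerate case $c = 0$ is immediate, since the right-hand side then equals $1$ and $\Pr(y \geq 0) \leq 1$ trivially; hence I would assume $c > 0$ for the remainder.

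The key step is to introduce a nonnegative shift $t \geq 0$ and exploit the event inclusion $\{y \geq c\} \subseteq \{(y+t)^2 \geq (c+t)^2\}$, which holds because on the event $y \geq c$ we have $y + t \geq c + t > 0$, and squaring preserves inequalities between nonnegative quantities. Applying Markov's inequality to the nonnegative random variable $(y+t)^2$ then yields
\begin{align*}
	\Pr(y \geq c) \leq \Pr\!\left((y+t)^2 \geq (c+t)^2\right) \leq \frac{\Expectation[(y+t)^2]}{(c+t)^2} = \frac{\Sigma_z + t^2}{(c+t)^2},
\end{align*}
where I used $\Expectation[(y+t)^2] = \Expectation[y^2] + 2t\,\Expectation[y] + t^2 = \Sigma_z + t^2$ by centering.

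Since this bound holds for every $t \geq 0$, the final step is to minimize the right-hand side in $t$. Differentiating $(\Sigma_z + t^2)/(c+t)^2$ and clearing the positive factor $(c+t)^3$ reduces the stationarity condition to $tc - \Sigma_z = 0$, giving the minimizer $t^\star = \Sigma_z/c > 0$; a sign check on the derivative confirms this is indeed a minimum over $t \geq 0$. Substituting $t^\star$ back collapses the bound to $\Sigma_z/(\Sigma_z + c^2)$, which is exactly the claimed inequality. I do not anticipate a genuine obstacle: the only points requiring care are the direction of the squaring step (guaranteed by $c + t \geq 0$, which is automatic) and the separate treatment of $c = 0$. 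An essentially equivalent route replaces the Markov step with Cauchy--Schwarz applied to $(y+t)\mathbbm{1}_{\{y \geq c\}}$, producing the same parametric bound before optimization, so either derivation completes the proof.
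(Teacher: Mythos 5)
Your proof is correct: the shift-and-Markov argument with the optimal shift $t^\star = \Sigma_z/c$ is the standard derivation of Cantelli's inequality, and your handling of the squaring step and the $c=0$ case is sound (the only unstated degeneracy is $\Sigma_z = 0$ with $c = 0$, where the right-hand side is ill-defined and the bound is vacuous anyway). The paper itself offers no proof to compare against --- it states the lemma as a known result with a citation to the literature --- so your argument simply supplies, correctly, the classical derivation the paper delegates to its reference.
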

	
The following lemma describes the derivation of the chance constraints~(\ref{eq:OCSFIC_SC}) in detail. 

\begin{Lemma}
The chance constraint
\begin{equation} \label{eq:new1}
\Pr\left(\alpha_{x,j}^\top E_k X > \beta_{x,j} \right) \leq p_{x,j}, 
\end{equation}
can be satisfied if 
\begin{align*}
	&\alpha_{x,j}^\top E_k\left(\ScriptA \mu_0 + \ScriptB V\right)- \beta_{x,j} +\nonumber \\
		& \hspace{20pt}\sqrt{\frac{1-p_{x,j}}{p_{x,j}}} \|\Sigma_{XX}^{1/2}\begin{bmatrix}I & \ScriptB K\end{bmatrix}^\top
		E_k^\top\alpha_{x,j}\| \leq 0
\end{align*}
\end{Lemma}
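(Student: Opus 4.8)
The plan is to observe that $\alpha_{x,j}^\top E_k X$ is a scalar random variable and then to bound its upper tail using the Chebyshev--Cantelli inequality (Lemma~\ref{Lemma:Cantelli}). First I would compute its first two moments. Writing $z = \alpha_{x,j}^\top E_k X$, linearity of expectation together with the mean expression $\Expectation[X] = \ScriptA \mu_0 + \ScriptB V$ gives the mean $\mu_z = \alpha_{x,j}^\top E_k(\ScriptA \mu_0 + \ScriptB V)$, while the variance is $\Sigma_z = \alpha_{x,j}^\top E_k \Expectation[\tilde{X}\tilde{X}^\top] E_k^\top \alpha_{x,j}$, with $\tilde{X} = X - \Expectation[X]$.

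Second, I would apply Lemma~\ref{Lemma:Cantelli} to $z$ with the choice $c = \beta_{x,j} - \mu_z$, which yields
\begin{align*}
\Pr(z \geq \beta_{x,j}) \leq \frac{\Sigma_z}{\Sigma_z + c^2}.
\end{align*}
Since $\{z > \beta_{x,j}\} \subseteq \{z \geq \beta_{x,j}\}$, it suffices to force the right-hand side to be at most $p_{x,j}$. Rearranging $\Sigma_z/(\Sigma_z + c^2) \leq p_{x,j}$ gives $c^2 \geq \frac{1-p_{x,j}}{p_{x,j}}\Sigma_z$, hence the sufficient condition $c \geq \sqrt{\tfrac{1-p_{x,j}}{p_{x,j}}}\,\sqrt{\Sigma_z}$. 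Substituting back $c = \beta_{x,j} - \mu_z$ and collecting terms produces
\begin{align*}
\mu_z - \beta_{x,j} + \sqrt{\frac{1-p_{x,j}}{p_{x,j}}}\,\sqrt{\Sigma_z} \leq 0.
\end{align*}

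Finally, I would recast the variance in the stated matrix form. Using $\Expectation[\tilde{X}\tilde{X}^\top] = \begin{bmatrix} I & \ScriptB K \end{bmatrix} \Sigma_{XX} \begin{bmatrix} I \\ K^\top \ScriptB^\top \end{bmatrix}$ and the fact that $\Sigma_{XX} \succeq 0$ (Lemma~\ref{lemma:SigmaXXisPositiveSemiDef}), so that the square root $\Sigma_{XX}^{1/2}$ exists, I obtain $\Sigma_z = \| \Sigma_{XX}^{1/2} \begin{bmatrix} I & \ScriptB K \end{bmatrix}^\top E_k^\top \alpha_{x,j} \|^2$; therefore $\sqrt{\Sigma_z}$ is exactly the norm appearing in the claim, and substituting $\mu_z$ recovers the displayed inequality. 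The only point that requires care is that Lemma~\ref{Lemma:Cantelli} presupposes $c \geq 0$. This is automatic rather than an obstacle: the concluding inequality forces $\mu_z - \beta_{x,j} \leq -\sqrt{\tfrac{1-p_{x,j}}{p_{x,j}}}\sqrt{\Sigma_z} \leq 0$, i.e. $c = \beta_{x,j} - \mu_z \geq 0$, so the hypothesis of the inequality is met precisely when the stated sufficient condition holds. Hence nothing remains beyond this consistency check and the routine algebraic rearrangement.
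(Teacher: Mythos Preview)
Your proposal is correct and follows essentially the same route as the paper: both apply the Chebyshev--Cantelli inequality to the scalar $z=\alpha_{x,j}^\top E_k X$, identify its mean and variance via $\Expectation[X]$ and $\Expectation[\tilde X\tilde X^\top]$, and then rearrange to the stated second-order cone form. Your version is in fact slightly tidier, since you choose $c=\beta_{x,j}-\mu_z$ directly and explicitly verify the hypothesis $c\ge 0$, which the paper leaves implicit.
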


\begin{proof}
	It follows from Lemma~\ref{Lemma:Cantelli} that the following inequality on the random scalar variable $\alpha_j^\top E_k X$ holds
	\begin{align}
		&\Pr(\alpha_j^\top E_k X \leq \alpha_j^\top E_k \Expectation[X] + c) > \nonumber \\
		&\hspace{50pt} 1 - \frac{\alpha_j^\top E_k \Expectation[\tilde{X}\tilde{X}^\top]E_k^\top \alpha_j^\top }{\alpha_j^\top E_k \Expectation[\tilde{X}\tilde{X}^\top]E_k^\top \alpha_j^\top + c^2},
	\end{align}
	In addition, inequality (\ref{eq:new1}) can be equivalently rewritten as
	\begin{align}\label{ineq:Apdx1}
		\Pr(\alpha_{x,j}^\top E_k X \leq \beta_{x,j}) > 1 - p_{x,j}.
	\end{align}
	We wish to compute $c>0$ that satisfies
	\begin{align}
		p_{x,j} = \frac{\alpha_j^\top E_k \Expectation[\tilde{X}\tilde{X}^\top]E_k^\top \alpha_j^\top }{\alpha_j^\top E_k \Expectation[\tilde{X}\tilde{X}^\top]E_k^\top \alpha_j^\top + c^2},
	\end{align}
	and obtain 
	\begin{align}
		c = \sqrt{\frac{1-p_{x,j}}{p_{x,j}}}\sqrt{ \alpha_{x,j}^\top E_k \Expectation[\tilde{X}\tilde{X}^\top]E_k^\top\alpha_{x,j}^\top}.
	\end{align}
	Therefore, the following inequality holds
	\begin{align}\label{ineq:Apdx2}
	&\Pr\bigg(\alpha_{x,j}^\top E_k X \leq \alpha_{x,j}^\top E_k \Expectation[X] \nonumber\\ &\hspace{20pt}+\sqrt{\frac{1-p_{x,j}}{p_{x,j}}}\sqrt{ \alpha_{x,j}^\top E_k \Expectation[\tilde{X}\tilde{X}^\top]E_k^\top\alpha_{x,j}^\top}\bigg) > 1-p_{x,j}.
	\end{align}
	By comparing~(\ref{ineq:Apdx1}) and~(\ref{ineq:Apdx2}), it follows that (\ref{ineq:Apdx1}) is satisfied if
	\begin{align}
		\alpha_{x,j}^\top E_k \Expectation[X] +\sqrt{\frac{1-p_{x,j}}{p_{x,j}}}\sqrt{ \alpha_{x,j}^\top E_k \Expectation[\tilde{X}\tilde{X}^\top]E_k^\top\alpha_{x,j}^\top} \leq \beta_{x,j},
	\end{align}
	which is equivalent to the second order cone constraint in terms of $V$ and $K$~(\ref{eq:OCSFIC_SC}).
\end{proof}

\end{document}